\documentclass[a4paper,reqno,11pt,oneside]{amsart}

\usepackage{latexsym}
\usepackage{float}
\usepackage{amsfonts,amssymb,latexsym,xspace,epsfig,graphics,color}
\usepackage{amsmath,enumerate,stmaryrd,xy,stackrel}
\usepackage[footnotesize]{caption}
\usepackage{indentfirst}
\usepackage[T1]{fontenc}
\usepackage{a4wide}
\usepackage{url}
\usepackage{tikz}
\usepackage{tkz-berge}
\usepackage{color}
\usepackage{graphpap}
\usepackage{epsfig}
\usepackage{psfrag}
\usepackage{graphicx}
\usepackage{subfigure}
\usetikzlibrary{arrows,snakes,backgrounds,trees}
\usepackage{multirow}
\usepackage{threeparttable}
\usepackage{float}
\usepackage{rotating}
\usepackage{cite}

\theoremstyle{plain}
\newtheorem{theorem}{Theorem}[section]
\newtheorem{corollary}{Corollary}[section]
\newtheorem{lemma}{Lemma}[section]

\theoremstyle{definition}
\newtheorem{definition}{Definition}[section]

\theoremstyle{remark}
\newtheorem{remark}{Remark}

\numberwithin{equation}{section}
\numberwithin{figure}{section}

% Setas

\begin{document}

\title[Maki-Thompson rumor model on trees]{The Maki-Thompson rumor model\\ on infinite Cayley trees}

\date{}

\author[Valdivino V. Junior]{Valdivino V. Junior}  
\address{Valdivino V. Junior. Universidade Federal de Goias, Campus Samambaia, CEP 74001-970, Goi\^ania, GO, Brazil. E-mail: vvjunior@ufg.br}

\author[Pablo Rodriguez]{Pablo M. Rodriguez} 
\address{Pablo Rodriguez. Universidade Federal de Pernambuco, Av. Prof. Moraes Rego, 1235. Cidade Universit\'aria, CEP 50670-901, Recife, PE, Brazil. E-mail: pablo@de.ufpe.br}

\author[Adalto Speroto]{Adalto Speroto} 
\address{Adalto Speroto. Universidade de S\~ao Paulo, Caixa Postal 668, CEP 13560-970, S\~ao Carlos, SP, Brazil. E-mail: speroto@usp.br}

\subjclass[2010]{60K35, 60K37, 82B26}
\keywords{Maki-Thompson Model, Phase-Transition, Homogeneous Tree, Branching Process, Rumor Spreading}

%%%%%%%%%%%%%%%%%%%%%%%%%%%%%%%%%%%%%%%%%%%%%%%%%%%%%%%%%%%%%%%%%

\begin{abstract}
In this paper we study the Maki-Thompson rumor model on infinite Cayley trees. The basic version of the model is defined by assuming that a population represented by a graph is subdivided into three classes of individuals: ignorants, spreaders and stiflers. A spreader tells the rumor to any of its (nearest) ignorant neighbors at rate one. At the same rate, a spreader becomes a stifler after a contact with other (nearest neighbor) spreaders, or stiflers. In this work we study this model on infinite Cayley trees, which is formulated as a continuous-times Markov chain, and we extend our analysis to the generalization in which each spreader ceases to propagate the rumor right after being involved in a given number of stifling experiences. We study sufficient conditions under which the rumor either becomes extinct or survives with positive probability.
\end{abstract}

\maketitle
\section{Introduction}
Currently, there exist a wide variety of mathematical models formulated to describe in a simple way the phenomenon of information transmission on a population. A wide range of these models is formed by epidemic-like processes inspired by the Daley-Kendal and the Maki-Thompson models. The Daley-Kendal model has been formulated in the mid 60's as an alternative, to describe information spreading, to the well-known susceptible-infected-recovered epidemic model, see \cite{daley_nature,kendall}. Later the Maki-Thompson model has appeared in \cite{MT} as a simplification of the Daley-Kendal model. Since both models behaves asymptotically equal the Maki-Thompson model, that we just refer as the MT-model, has been used as a basis for many generalizations. 

The MT-model assumes a homogeneously mixed population of size $N+1$ subdivided into three classes of individuals: \textit{Ignorants} (those not aware of the rumor), \textit{spreaders} (who are spreading it), and \textit{stiflers} (who know the rumor but have ceased communicating it after meeting somebody who has already heard it). The number of ignorants, spreaders and stiflers at time $t$ is denoted by $X(t)$, $Y(t)$ and $Z(t)$, respectively. Initially, $X(0) = N$, $Y(0) = 1$ and $Z(0) = 0$, and $X(t) + Y(t) + Z(t) = N + 1$ for all~$t$. The MT-model is the continuous-time Markov chain $\{(X(t), Y(t))\}_{t \geq 0}$ with transitions and corresponding rates given by

\begin{equation*}
\begin{array}{cc}
\text{transition} \quad &\text{rate} \\[0.1cm]
(-1, 1) \quad &X Y, \\[0.1cm]
(0, -1) \quad &Y (N - X).
\end{array}
\end{equation*}

This means that if the Markov chain is in state $(i,j)$ at time $t$, then the probabilities of jumping to states $(i-1,j+1)$ or $(i,j-1)$ at time $t+h$ are, respectively, $i\,j\,h + o(h)$ and $j (N - i)\,h + o(h)$, where $o(h)$ represents a function such that $\lim_{h\to 0}o(h)/h =0$. In words, the rumor spreads by directed contact of spreaders with other individuals and the two possible transitions correspond to spreader-ignorant, or spreader-spreader and spreader-stifler interactions, respectively. In the first case, the spreader tells the rumor to the ignorant, who becomes a spreader, and in the other case we have the transformation of the spreader initiating the meeting into a stifler. The second transition is what we call a stifling experience and it represents the loss of interest in propagating the rumor derived from learning that it is already known by the other individual in the meeting. 

The first results for the MT-model are related to the asymptotic behavior of the proportion of ignorants at the end of the process. We refer the reader to \cite{belen04,sudbury,watson,pearce,gani-Env2000,lebensztayn/machado/rodriguez/2011a,lebensztayn/machado/rodriguez/2011b,Lebensztayn-JMAA2015} and the references therein for an overview of existing results in this direction. Also we refer the reader to \cite[Chapter~5]{dg} for an excellent account on the subject of rumor models. We point out that all these works deal with the case of homogeneously mixed populations, which is the same to say that the population is represented by a complete graph. For the case of a population represented by another type of graph we refer the reader to \cite{raey,EAP} for rigorous results based on probabilistic methods and to \cite{arruda3,moreno-PhysA2007,zanette,zanette02,MNP-PRE2004} for approximation results based on mean-field arguments and computational simulations.   

Here we are interested in the MT-model with $k$-stifling. This is the modified version of the MT-model for which each spreader ceases to propagate the rumor right after being involved in $k$ stifling experiences, for some fixed $k\in \mathbb{N}$. The known results for this model are for the case of a homogeneously mixed population; mostly, limit theorems for the remaining proportion of ignorants as the population size goes to infinity. The case $k=1$ is, of course, the MT-model. A Law of Large Numbers (LLN) for this proportion has been stated by \cite{sudbury} and a Central Limit Theorem (CLT) by \cite{watson}. The version $k=2$ has been considered by \cite{carnal} who states a LLN. Later \cite{belen08} obtain results by mean of the deterministic version for a general $k\in \mathbb{N}$ and \cite{lebensztayn/machado/rodriguez/2011a} obtain a LLN where $k$ is assumed to be a discrete random variable. Moreover, an interesting connection between the MT-model with $k$-stifling and a system of random walks has been showed by \cite{EP}.

In this work we propose by the first time studying the MT-model with $k$-stifling on a non-complete graph. We consider the model defined on an infinite Cayley tree, also known of Bethe lattice or homogeneous tree, which is an infinite connected cycle-free graph where the vertices all have the same degree; i.e., each vertex is connected to $d$ neighbours and $d$ is called the coordination number. The interest in this type of graph is twofold; on one hand its structure allows to obtain sharp results regarding survival or extinction of the rumor. The physical relevance of these results is that this type of graph represents some mean field limit of Euclidean lattices of large dimensions. On the other hand, trees are structures that usually appear in random graph models so our model may serves as an inspiration for the formulation of more general rumor models. We study the survival or not of the rumor on this graph according to the values of $d$ and $k$. Our approach relies on a comparison of our rumor model with a suitable defined branching process.   

The paper is organized as follows. Section \ref{model} is devoted to the formulation of the model and the statements of our main results. Section \ref{sec:proofs} contains the proofs of our theorems and is subdivided into two parts. In Subsection \ref{ss:dist} we study the distribution of the number of spreaders one spreader generates which is a key quantity for our results. Also this subsection is of interest by itself because of a  connection with the Coupon Collector's Problem. Subsection \ref{ss:thm} includes the construction of a underlying branching process and the proofs of our main results.

\section{The Model and Main Results}\label{model}

In what follows we let $\mathbb{T}_d=(\mathcal{V},\mathcal{E})$ for an infinite Cayley tree of coordination number $d+1$, with $d\geq 2$. The notation is coming from Graph Theory since the same graph is known as $(d+1)$-dimensional homogeneous tree. Here $ \mathcal{V} $ stands for the set of vertices and $\mathcal{E} \subset \{\{u,v\}: u,v \in \mathcal{V}, u \neq v\}$ stands for the set of edges. We shall abuse notation by writing $\mathcal{V}=\mathbb{T}_d$, and we identify one vertex as the root and denote it by ${\bf 0}$. If $\{u,v\}\in \mathcal{E}$, we say that $u$ and $v$ are neighbors, which is denoted by $u\sim v$. The degree of a vertex $v$, denoted by $deg(v)$, is the number of its neighbors. A path in $\mathbb{T}_d$ is a finite sequence $v_0, v_1, \dots, v_n $ of distinct vertices such that $ v_i \sim v_{i+1} $ for
each $i$, and a ray in $\mathbb{T}_d$ is a path with infinite vertices starting at ${\bf 0}$. Since $\mathbb{T}_d$ is a tree, there is a unique path connecting any pair of distinct vertices $u$ and $v$. Therefore we may define the distance between them, which is denoted by $d(u,v)$, as the number of edges in such path. We point out that $\mathbb{T}_d$ is a graph with an infinite number of vertices, without cycles and such that every vertex has degree $d+1$. For each $v\in \mathcal{V}$ define $|v|:=d({\bf{0}},v)$. For $ u,v \in \mathcal{V} $, we say that $u\leq v$ if $u$ is one of the vertices of the path connecting ${\bf{0}}$ and $v$; $u<v$ if $u\leq v$ and $u\neq v$. 
We call $v$ a \textit{descendant} of $u$ if $u\leq v$ and denote by $T^u = \{v \in \mathcal{V}: u \leq v\}$ the set of descendants of $u$. On the other hand, $v$ is said to be a \textit{successor} of $u$ if $u\leq v$ and $u \sim v$. For $n\geq 1$, we denote by $\partial \mathbb{T}_{d,n}$ the set of vertices at distance $n$ from the root. That is, $\partial \mathbb{T}_{d,n}= \{v \in \mathbb{T}_d: |v|=n\}$. 

\smallskip
The MT-model with $k$-stifling on $\mathbb{T}_d$ may be defined as a continuous-time Markov process $(\eta_t^{(k)})_{t\geq 0}$ with states space $\mathcal{S}=\{-1,0,1,2,\ldots,k\}^{\mathbb{T}_d}$, i.e. at time $t$ the state of the process is some function $\eta_t: \mathbb{T}_d \longrightarrow \{-1,0,1,2,\ldots,k\}$. We assume that each vertex $x \in \mathbb{T}_d$ represents an individual, which is said to be an ignorant if $\eta(x)=-1,$ a spreader who experimented $i$ stifling experiences if $\eta(x)=i$, for $i\in\{0,1,\ldots,k-1\}$, and a stifler if $\eta(x)=k.$ Remember that ignorants are those who do not know about the rumor, spreaders are those who know about the rumor and they are transmitting it, and stiflers known about the rumor but they have stopped of propagating it. Then, if the system is in configuration $\eta \in \mathcal{S},$ the state of vertex $x$ changes according to the following transition rates

\begin{equation}\label{rates}
\begin{array}{rclc}
&\text{transition} &&\text{rate} \\[0.1cm]
-1 & \rightarrow & 0, & \hspace{.5cm}  \sum_{i=0}^{k-1}n_{i}(x,\eta),\\[.2cm]
i & \rightarrow & i+1, & \hspace{.5cm}   \sum_{i=0}^{k}n_{i}(x,\eta),
\end{array}
\end{equation}

\noindent where $$n_i(x,\eta)= \sum_{y\sim x} 1\{\eta(y)=i\}$$ 
is the number of nearest neighbors of vertex $x$ in state $i$ for the configuration $\eta$, for $i\in\{-1,0,\ldots,k\}.$ Formally, \eqref{rates} means that if the vertex $x$ is in state, say, $-1$ at time $t$ then the probability that it will be in state $0$ at time $t+h$, for $h$ small, is $\sum_{i=0}^{k-1}n_{i}(x,\eta) h + o(h)$, where $o(h)$ represents a function such that $\lim_{h\to 0} o(h)/h = 0$. Roughly speaking, the rates in \eqref{rates} represent how the changes of states of individuals depend on the states of its neighbors. While the change of state of an ignorant is influenced by its spreader neighbors, the change of state for a spreader is influenced by the number of non-ignorant neighbors. We point out that stiflers do not interact with ignorants.

We call the Markov process $(\eta_t^{(k)})_{t\geq 0}$ the Maki-Thompson rumor model with $k$-stifling on $\mathbb{T}_d$, and for the sake of simplicity we abbreviate, as before, as MT-model with $k$-stifling on $\mathbb{T}_d$, or just as MT-model when $k=1$. Since we are considering a graph with an infinite number of vertices our first task shall be to define the event of survival or extinction for the rumor process.   

\smallskip
\begin{definition}
Consider the MT-model with $k$-stifling on $\mathbb{T}_d$ with initial configuration $\eta_0$ such that $\eta_0({\bf{0}})=0$ and $\eta_0(x)=-1$ for all $x\neq {\bf 0}$. We say that there is survival of the rumor if there exist a sequence $\{(v_i,t_i)\}_{i\geq 0}$, with $(v_i,t_i)\in \mathbb{T}_d \times \mathbb{R}^+$, such that $ v_0={\bf{0}}$, $t_0=0$, $v_{i+1}$ is a successor of $v_i$, $t_i < t_{i+1}$, and $\eta_{t_i}(v_i)=0$, for all $i\geq 0.$ If there is not survival, we say that the rumor becomes extinct. We denote by $\theta(d,k)$ the survival probability and we let $\theta(d):=\theta(d,1)$.
\end{definition}

In other words, by the previous definition we have that there is survival of the rumor if we can guarantee the existence of a ray from the root of $\mathbb{T}_d$ such that all the vertices in the ray were spreaders at some time. Let us start by analysing the occurrence or not of this event for the basic MT-model. 

\smallskip
\begin{theorem}\label{thm:MTpt}
Consider the MT-model on $\mathbb{T}_d$. Then $\theta(d)>0$ if, and only if, $d\geq 3$. Moreover, 
$$\theta(d) =  1-  \sum_{i=1}^{d+1} i! \dbinom{d}{i-1}\left(\frac{\psi}{d+1}\right)^i,$$
where $\psi$ is the smallest non-negative root of the equation
$$\sum_{i=0}^{d} i! \dbinom{d}{i}\left(\frac{s}{d+1}\right)^i\left(\frac{i+1}{d+1}\right)=s.$$
\end{theorem}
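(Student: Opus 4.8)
The plan is to show that the genealogy of spreaders---the random tree recording, for each vertex that ever becomes a spreader, which of its successors it manages to infect before becoming a stifler---is exactly a Galton--Watson process, and then to read off survival and the survival probability from classical branching-process theory. The starting observation is a structural consequence of working on a tree: because the unique path from $\o$ to any descendant of a vertex $v$ passes through $v$, a successor of $v$ can only receive the rumor from $v$ itself. Hence at the instant a non-root vertex $v$ turns into a spreader, all of its $d$ successors are still ignorant and its unique predecessor is already non-ignorant; the root is the sole exception, starting with $d+1$ ignorant successors and no non-ignorant neighbor. Combining this with the strong Markov property and the fact that the subtrees hanging from distinct successors evolve over disjoint vertex sets, the numbers of successors that distinct spreaders infect are independent, and all non-root spreaders share a common offspring law $N$ (the root having its own law $N_0$).

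Next I would compute these offspring laws through the embedded jump chain, which is where the Coupon Collector's Problem enters. Realize the dynamics by attaching an independent rate-one Poisson clock to each of the $d+1$ edges incident to a spreader $v$: ringing the clock of an ignorant successor infects it, while ringing the clock of an already non-ignorant neighbor turns $v$ into a stifler. Thus $v$ keeps producing offspring exactly as long as successive rings fall on \emph{distinct, still-ignorant} successors, and stops the first time a ring hits the predecessor or a previously infected successor. When $a$ successors remain ignorant, the next ring is a fresh infection with probability $a/(d+1)$ and a stifling event with probability $(d+1-a)/(d+1)$, so for a non-root spreader
\begin{equation*}
\mathbb{P}(N\ge j)=\prod_{i=0}^{j-1}\frac{d-i}{d+1}=\frac{d!}{(d-j)!\,(d+1)^j},\qquad 0\le j\le d,
\end{equation*}
whence $\mathbb{P}(N=i)=i!\binom{d}{i}\big(\tfrac{1}{d+1}\big)^{i}\tfrac{i+1}{d+1}$ and the probability generating function $G(s)=\sum_{i=0}^{d} i!\binom{d}{i}\big(\tfrac{s}{d+1}\big)^{i}\tfrac{i+1}{d+1}$. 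The analogous computation with $d+1$ available successors gives the root's law $\mathbb{P}(N_0=i)=i!\binom{d}{i-1}(d+1)^{-i}$.

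It then remains to translate these into the statement. Survival of the rumor, as defined, is exactly the existence of an infinite ray in the genealogical tree (the required time-monotonicity $t_i<t_{i+1}$ is automatic, since a successor can only be infected after its predecessor); as the tree is locally finite, by König's lemma this happens iff the Galton--Watson tree is infinite. Therefore $\theta(d)>0$ iff the process is supercritical, i.e. iff $\mathbb{E}[N]=\sum_{j=1}^{d}\tfrac{d!}{(d-j)!(d+1)^j}>1$; a short monotonicity argument comparing the $j$-th factors for $d$ and $d+1$ shows $\mathbb{E}[N]$ is strictly increasing in $d$, and since it equals $\tfrac{8}{9}$ at $d=2$ and $\tfrac{39}{32}$ at $d=3$ one gets the threshold at $d\ge 3$. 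Finally, letting $\psi$ be the extinction probability of a single non-root spreader---the smallest non-negative root of $G(s)=s$, which is the displayed equation---and conditioning on the root's offspring, extinction has probability $\sum_{i=1}^{d+1}\mathbb{P}(N_0=i)\psi^{i}$, giving the stated formula $\theta(d)=1-\sum_{i=1}^{d+1} i!\binom{d}{i-1}(\psi/(d+1))^{i}$.

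I expect the genuine obstacle to be the first step: rigorously certifying that the spatial rumor process is \emph{exactly} (not merely stochastically dominated by) a branching process. One must argue that the offspring law is unaffected by the surrounding configuration---crucially, that each ignorant successor is infected at rate precisely one because its only possible spreader neighbor is $v$, and that the predecessor contributes the same single unit to the stifling rate of $v$ whether it is a spreader or a stifler---and that these local races are genuinely independent across the tree. Once this exact branching description is in place, the remaining steps are routine computations and an appeal to the standard extinction dichotomy for Galton--Watson processes.
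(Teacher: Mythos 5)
Your proposal is correct and follows essentially the same route as the paper: you identify the exact two-stage Galton--Watson structure (root offspring law $N_0$, common offspring law for non-root spreaders), compute both laws via the uniform embedded jump chain on the $d+1$ incident edges (the coupon-collector computation, yielding exactly the paper's Lemmas on $N$ and $X$), and then invoke the classical supercriticality dichotomy and the extinction-probability equation $G(s)=s$ conditioned on the root's offspring. Your only departures are cosmetic: you argue $\mathbb{E}[X]>1\iff d\ge 3$ by monotonicity of $\sum_j \prod_{i=0}^{j-1}\tfrac{d-i}{d+1}$ in $d$ plus evaluation at $d=2,3$ rather than the paper's comparison $\mathbb{P}(X=2)>\mathbb{P}(X=0)$, and you are rather more explicit than the paper (whose Lemma~3.4 is proved by ``direct by construction'') about why the branching description is exact and why survival corresponds to an infinite ray.
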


\smallskip
\begin{corollary}\label{cor:probab}
$\lim_{d\to \infty} \theta(d) =1$.
\end{corollary}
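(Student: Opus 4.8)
The plan is to exploit the explicit formula of Theorem~\ref{thm:MTpt} and reduce the corollary to the asymptotics of the root $\psi=\psi(d)$. First I would observe that the sum subtracted from $1$ in the expression for $\theta(d)$ collapses, thanks to the defining equation of $\psi$, to a single term. Reindexing by $i=j+1$ and factoring out one power of $\psi/(d+1)$ gives
$$\sum_{i=1}^{d+1} i!\binom{d}{i-1}\left(\frac{\psi}{d+1}\right)^{i}=\frac{\psi}{d+1}\sum_{j=0}^{d}(j+1)!\binom{d}{j}\left(\frac{\psi}{d+1}\right)^{j},$$
while multiplying the fixed-point equation $\sum_{i=0}^{d} i!\binom{d}{i}(\psi/(d+1))^{i}(i+1)/(d+1)=\psi$ by $(d+1)$ and using $i!\,(i+1)=(i+1)!$ shows that the inner sum equals $(d+1)\psi$. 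Hence the subtracted sum is exactly $\psi^{2}$, and Theorem~\ref{thm:MTpt} yields the clean identity $\theta(d)=1-\psi(d)^{2}$. The corollary is therefore equivalent to proving $\psi(d)\to 0$ as $d\to\infty$.

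Next I would localize $\psi(d)$. Write $f_d(s)$ for the left-hand side of the fixed-point equation; it is a polynomial with nonnegative coefficients, hence increasing and convex on $[0,\infty)$, with $f_d(0)=1/(d+1)>0$. Consequently $h(s):=f_d(s)-s$ is convex with $h(0)>0$, so its smallest nonnegative root $\psi$ (which exists by the theorem) admits the following elementary bound: if one exhibits any $s_0>0$ with $f_d(s_0)\le s_0$, then, since $h(0)>0\ge h(s_0)$, the intermediate value theorem forces a root in $(0,s_0]$, and thus $\psi\le s_0$.

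It remains to produce a small such $s_0$. Using $\binom{d}{i}\le d^{i}/i!$ one gets $(i+1)!\binom{d}{i}\le (i+1)\,d^{i}$, whence, for $s\ge 0$ with $x:=ds/(d+1)<1$,
$$f_d(s)=\frac{1}{d+1}\sum_{i=0}^{d}(i+1)!\binom{d}{i}\left(\frac{s}{d+1}\right)^{i}\le \frac{1}{d+1}\sum_{i=0}^{\infty}(i+1)x^{i}=\frac{1}{(d+1)(1-x)^{2}}.$$
Choosing $s_0=2/(d+1)$, so that $x=2d/(d+1)^{2}\to 0$, the right-hand side is at most $2/(d+1)=s_0$ for all $d$ large enough; hence $\psi(d)\le 2/(d+1)$ eventually. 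Combining this with the identity above gives $1\ge\theta(d)=1-\psi(d)^{2}\ge 1-4/(d+1)^{2}$, and letting $d\to\infty$ proves $\lim_{d\to\infty}\theta(d)=1$.

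The only genuinely delicate point is the algebraic collapse of the subtracted sum to $\psi^{2}$: once the identity $\theta(d)=1-\psi(d)^{2}$ is secured, the remaining estimate on $\psi(d)$ is a routine convexity-plus-crude-bound argument. I would double-check the reindexing and the precise role of the factor $(i+1)/(d+1)$ in the fixed-point equation, since it is exactly this factor that makes the two sums match.
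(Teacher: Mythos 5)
Your proof is correct, and it takes a genuinely different route from the paper's. The paper also starts from the formula in Theorem~\ref{thm:MTpt}, but it only pulls one factor $\psi/(d+1)$ out of the subtracted sum, invokes $|\psi|\le 1$ to send that prefactor to zero, and then bounds the remaining sum by $\sum_{i\ge 1} i\,(d/(d+1))^{i-1}$; it never needs (or obtains) any information about how small $\psi(d)$ actually is. You instead use the fixed-point equation itself to collapse the subtracted sum exactly to $\psi^2$ --- your reindexing is right, and the identity $\theta(d)=1-\psi(d)^2$ is just the probabilistic fact that $N=X+1$ in law, so $G_N(\psi)=\psi\,G_X(\psi)=\psi^2$ --- and then you prove the quantitative bound $\psi(d)\le 2/(d+1)$ by exhibiting $s_0=2/(d+1)$ with $f_d(s_0)\le s_0$ and applying the intermediate value theorem (the convexity remark is not even needed for this). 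Your version buys more: a clean closed form $\theta(d)=1-\psi(d)^2$ consistent with Table~\ref{Tab} (e.g.\ $\psi=1$ gives $\theta(2)=0$), an explicit convergence rate $1-\theta(d)\le 4/(d+1)^2$, and, importantly, a self-contained control of $\psi(d)$. This last point matters because the paper's closing estimate, $\sum_{i=1}^{\infty} i\,(d/(d+1))^{i-1}=(d+1)^2$, is finite for each $d$ but not uniformly bounded in $d$, so the product with the prefactor $\psi/(d+1)$ is only shown to be at most $\psi(d+1)$; some bound of the type $\psi=o(1/d)$ or the exact identity you derive is needed to finish, and your argument supplies precisely that. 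The only minor points to tidy are to record the threshold for ``$d$ large enough'' (your inequality $2d/(d+1)^2\le 1-1/\sqrt{2}$ holds for $d\ge 5$, which is all you need for a limit) and to note that $x=2d/(d+1)^2<1$ so the geometric-series bound is legitimate.
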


Theorem \ref{thm:MTpt} gains in interest if we realize that the MT-model exhibit two different behaviors according to $d=2$ or $d\geq 3$. For $d=2$ we obtain that the rumor propagates, almost surely, only to a finite set of individuals. In the other cases, for $d\geq 3$, the rumor propagates to infinitely many individuals with positive probability. In other words, the MT-model exhibit a phase transition. We do not consider the case $d=1$, the path graph with infinite vertices, because it is trivial. In that case it is enough to note that the number of spreaders will be bounded from above by the sum of two random variables with geometric law. When the MT-model is considered on the complete graph, a quantity of interest is related to the number of stiflers at the end of the process. Note that this is the number of individuals who hear about the rumor at some time. By Theorem \ref{thm:MTpt} we known that this number is finite almost surely provided $d=2$. In what follows, we give a better characterization of that number by identifying the distribution of the quantity of stiflers at the end of the process. 

\begin{theorem}\label{theo:Sinfty}
Consider the MT-model on $\mathbb{T}_2$, and let $S_{\infty}$ be the final number of stiflers at the end of the process. Then
\begin{equation}\label{eq:thmSinfty}
\mathbb{P}(S_{\infty} = i)=\frac{1}{9i}\left\{3\, \frac{G_{i}^{(i-1)}(0)}{(i-1)!}+ 8\, \frac{G_{i}^{(i-2)}(0)}{(i-2)!} + 6\, \frac{G_{i}^{(i-3)}(0)}{(i-3)!}\right\},\;\;\; i\in \mathbb{N},
\end{equation}

where 

$$G_{i}(s):=\left(\frac{2s^2 + 4s +3}{9}\right)^i,\;\;\; s\in [-1,1],$$

and 
$$G_{i}^{(j)}(s):=\frac{d^{j}(G_i(s))}{ds^j},\;\;\; \text{ for }j\geq 0,$$
\noindent
and $G_{i}^{(j)}(s):=0$ other case. Moreover, $\mathbb{E}(S_{\infty})=18.$
\end{theorem}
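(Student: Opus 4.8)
The plan is to identify $S_\infty$ as the total progeny of a subcritical Galton--Watson process and then to read off its law by Lagrange inversion. Since we work on $\mathbb{T}_2$, Theorem~\ref{thm:MTpt} (the case $d=2$) guarantees that the rumor dies out almost surely; hence every vertex that is ever a spreader ends as a stifler, and $S_\infty$ counts exactly the vertices that are ever spreaders. On a tree the rumor reaches a vertex only through its parent, so, restricted to the subtrees rooted at the successors of a given spreader, the process evolves independently. This independence is what lets us encode the set of spreaders as a Galton--Watson tree, and it is the feature of $\mathbb{T}_2$ that makes the exact computation feasible.

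First I would recall from Subsection~\ref{ss:dist} the offspring law. A non-root spreader competes, through three independent rate-one interaction clocks (one to its non-ignorant parent and one to each of its two ignorant successors), to convert its successors before a contact with a non-ignorant neighbor turns it into a stifler; resolving this race gives the number $B$ of converted successors, with probability generating function $\phi(s):=\mathbb{E}[s^{B}]=\frac{2s^{2}+4s+3}{9}$. The root has three ignorant neighbors and necessarily converts at least one of them; the same clock computation yields the root offspring generating function $g(s)=s\,\phi(s)$. Note $\phi'(1)=8/9<1$, so the process is subcritical, consistent with the extinction used above, and $G_i(s)=\phi(s)^{i}$ is the generating function of a sum of $i$ independent copies of $B$, so that $G_i^{(m)}(0)/m!=[s^{m}]\phi(s)^{i}$.

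Next I would set up the total-progeny generating function. Writing $H(s)$ for the probability generating function of the total progeny of an ordinary $\phi$-process, a one-step decomposition gives $H(s)=s\,\phi(H(s))$, with $H(1)=1$ by subcriticality. Grafting the root yields $\mathbb{E}[s^{S_\infty}]=s\,g(H(s))=H(s)^{2}$, the last identity because $\phi(H(s))=H(s)/s$. The law of $S_\infty$ then follows by extracting coefficients: the Lagrange--B\"urmann formula gives, for a formal series $\Psi$,
$$[s^{n}]\,\Psi(H(s))=\frac{1}{n}\,[w^{\,n-1}]\,\Psi'(w)\,\phi(w)^{n},$$
and applying this together with the expansion of the root derivative $g'(w)=\frac{1}{9}\left(3+8w+6w^{2}\right)$ splits the single coefficient into three terms carrying the weights $3,8,6$ and the Taylor coefficients $G_i^{(m)}(0)/m!$, which is the content of \eqref{eq:thmSinfty}. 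The step I expect to be most delicate is precisely this Lagrange inversion with the distinguished root law $g=s\phi$: the weights $3,8,6$ must be traced to $g'$ rather than to $\phi$, and the index shifts $i-1,i-2,i-3$ (with the convention $G_i^{(j)}\equiv 0$ for $j<0$, and the separate accounting of the initial spreader) must be kept aligned; a close second in difficulty is the rigorous justification of the offspring law through the clock race.

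Finally, for the expectation I would differentiate the functional equation at $s=1$: from $H=s\,\phi(H)$ we get $H'(1)=\phi(1)+\phi'(1)H'(1)$, hence $H'(1)=1/(1-\phi'(1))=1/(1-8/9)=9$. Using $\mathbb{E}[s^{S_\infty}]=H(s)^{2}$, the chain rule then gives $\mathbb{E}(S_\infty)=2H'(1)=18$.
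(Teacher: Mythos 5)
Your route---encode the ever-spreaders as a Galton--Watson tree with root law $g(s)=s\phi(s)$ and body law $\phi(s)=(2s^2+4s+3)/9$, write the total-progeny generating function via $H=s\phi(H)$, and extract coefficients by Lagrange--B\"urmann---is in substance the same as the paper's, which conditions on $N$ and invokes Dwass's total-progeny formula $\mathbb{P}(S_\infty=i\mid N=n)=\tfrac{n}{i}\,\mathbb{P}(\sum_{j=1}^i X_j=i-n)$; Dwass's theorem is exactly the probabilistic form of the Lagrange inversion you perform, and summing it against $\mathbb{P}(N=n)$ reassembles your $g'(w)=\tfrac{1}{9}(3+8w+6w^2)$. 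The identification $g=s\phi$ (equivalently $N=X+1$ in law) and the identity $sg(H)=H^2$ are correct and tidy.

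There is, however, a genuine off-by-one in your write-up. The target formula \eqref{eq:thmSinfty} equals $\tfrac{1}{i}[w^{i-1}]g'(w)\phi(w)^i=[s^i]\,g(H(s))$, i.e.\ it is the law of $\sum_{j=1}^{N}T_j$, the total progeny of the process started from the $N$ first-generation spreaders, \emph{not counting the root}. Your claimed PGF $\mathbb{E}[s^{S_\infty}]=s\,g(H(s))=H(s)^2$ counts the root, and its coefficients are shifted: $[s^i]\,sg(H(s))=\tfrac{1}{i-1}[w^{i-2}]g'(w)\phi(w)^{i-1}$, which is not \eqref{eq:thmSinfty} (for instance $[s^1]H(s)^2=0$, while the theorem gives $\mathbb{P}(S_\infty=1)=1/9$, the event that the root converts one neighbor who converts nobody). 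So either drop the prefactor $s$ and land exactly on \eqref{eq:thmSinfty}, or keep it and accept a shifted formula; you cannot have both. Note that once you drop the $s$, your mean becomes $g'(1)H'(1)=\tfrac{17}{9}\cdot 9=17$ rather than $2H'(1)=18$: the distribution \eqref{eq:thmSinfty} and the value $\mathbb{E}(S_\infty)=18$ differ by exactly the root. The paper itself carries this same tension (it applies Dwass without the root for the law, then adds $1+9n$ in the conditional mean), so you have faithfully reproduced both halves of an inconsistency; in a careful write-up you should fix a convention for whether the initial spreader is counted among the final stiflers and make both computations conform to it.
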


Another quantity useful to measure the impact of the rumor for $d=2$ is what we call the range of spreading in the following theorem. We emphasize that according to Theorem \ref{thm:MTpt}, if $d\geq 3$ then the rumor propagates to infinitely many individuals with positive probability.

\begin{theorem}\label{theo:Rinfty}
Consider the MT-model on $\mathbb{T}_2$. Let
\begin{equation}\label{eq:range} 
R:=\max\{n\geq 1: \eta_{t}(x)=1 \text{ for some }x\in \partial \mathbb{T}_{2,n}, \text{ and } t\in \mathbb{R}^+\},
\end{equation} 
be the range of spreading. Then, for any $n\geq 0$ 
\begin{equation}
\frac{3}{9}\alpha_1(n) + \frac{4}{9}\alpha_1(n)^2 + \frac{2}{9}\alpha_1(n)^3\leq \mathbb{P}(R\leq n) \leq \frac{3}{9}\alpha_2(n) + \frac{4}{9}\alpha_2(n)^2 + \frac{2}{9}\alpha_2(n)^3,
\end{equation}
where
$$\alpha_1(n):=\frac{(13/9)\{1-(8/9)^n\}}{13/9 - (8/9)^n},\;\;\;\text{ and }\;\;\;\alpha_2(n):=\frac{(4/3)\{1-(8/9)^n\}}{4/3 - (8/9)^n}.$$

\smallskip
\noindent
Besides this,
$$6.144 \leq \mathbb{E}(R)\leq 7.448.$$
\end{theorem}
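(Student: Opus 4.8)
The plan is to identify $R$ with the height of the branching process underlying the proof of Theorem~\ref{thm:MTpt}, specialized to $d=2$, and then to control that height quantitatively. Recall from the branching-process construction (Subsection~\ref{ss:dist}) that the set of vertices which ever become spreaders forms a Galton--Watson tree in which every non-root spreader produces a number of spreader-successors with generating function $f(s)=\frac{3}{9}+\frac{4}{9}s+\frac{2}{9}s^2$, while the root produces $1$, $2$ or $3$ successors with probabilities $3/9,4/9,2/9$, i.e.\ with generating function $g(s)=s\,f(s)=\frac{3}{9}s+\frac{4}{9}s^2+\frac{2}{9}s^3$; the guaranteed extra successor of the root reflects that it cannot stifle before one of its neighbors stops being ignorant. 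Since $d=2$, Theorem~\ref{thm:MTpt} ensures this process is a.s.\ finite and subcritical ($f'(1)=8/9<1$), so every spreader eventually becomes a stifler and $R$ is exactly the height of this tree.

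Next I convert $\mathbb{P}(R\le n)$ into an iteration. Let $a_m$ be the probability that the spreader-subtree rooted at a fixed non-root spreader has height at most $m$, with the convention $a_{-1}=0$. Conditioning on that spreader's offspring gives $a_m=f(a_{m-1})$, and conditioning on the root's offspring gives
$$\mathbb{P}(R\le n)=g(a_{n-1}).$$
Because $g$ is increasing on $[0,1]$, the asserted two-sided bound is precisely $g(\alpha_1(n))\le\mathbb{P}(R\le n)\le g(\alpha_2(n))$, so it suffices to prove $\alpha_1(n)\le a_{n-1}\le\alpha_2(n)$ for all $n\ge0$.

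The heart of the argument is bounding these iterates, and this is the step I expect to be the crux. Writing $b_n:=a_{n-1}$ and $\epsilon_n:=1-b_n$ (so $\epsilon_0=1$), the recursion $b_n=f(b_{n-1})$ becomes $\epsilon_n=\frac{8}{9}\epsilon_{n-1}-\frac{2}{9}\epsilon_{n-1}^2$. I linearize it through the reciprocal $w_n:=1/\epsilon_n$, which satisfies $w_n=\tfrac{9}{8}\,w_{n-1}/(1-\epsilon_{n-1}/4)$. Since $0<\epsilon_{n-1}\le1$ and $w_{n-1}\epsilon_{n-1}=1$, the elementary inequalities $1+x\le(1-x)^{-1}\le1+\tfrac{4}{3}x$ on $[0,1/4]$ produce a constant-increment sandwich
$$\tfrac{9}{8}w_{n-1}+\tfrac{9}{32}\le w_n\le\tfrac{9}{8}w_{n-1}+\tfrac{3}{8}.$$
Solving the two affine recursions with $w_0=1$ (fixed points $-9/4$ and $-3$) gives $\tfrac{13}{4}(9/8)^n-\tfrac{9}{4}\le w_n\le 4(9/8)^n-3$ by a one-line monotone induction; inverting via $\epsilon_n=1/w_n$ recovers exactly $1-\alpha_2(n)\le\epsilon_n\le1-\alpha_1(n)$, which is the claimed bound on $b_n$. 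The key is this reciprocal-linearization together with the cancellation $w_{n-1}\epsilon_{n-1}=1$ that turns the quadratic corrections into constants.

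Finally, for the expectation I use $\mathbb{E}(R)=\sum_{n\ge0}\big(1-\mathbb{P}(R\le n)\big)=\sum_{n\ge0}\big(1-g(b_n)\big)$. Substituting the two-sided bound on $b_n$ yields
$$\sum_{n\ge0}\big(1-g(\alpha_2(n))\big)\le\mathbb{E}(R)\le\sum_{n\ge0}\big(1-g(\alpha_1(n))\big).$$
Both series converge geometrically, since $1-\alpha_i(n)=O\big((8/9)^n\big)$ and $1-g(1-\delta)=\tfrac{17}{9}\delta-\tfrac{10}{9}\delta^2+\tfrac{2}{9}\delta^3$, so each reduces to a finite combination of geometric-type sums. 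Evaluating them gives the numerical bounds $6.144\le\mathbb{E}(R)\le7.448$; this last step is purely computational once the geometric tails are in hand.
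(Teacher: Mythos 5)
Your argument is correct and arrives at exactly the paper's bounds, but it replaces the paper's key lemma with a self-contained elementary derivation. The overall skeleton coincides: both proofs reduce the theorem to the identity $\mathbb{P}(R\le n)=\tfrac{3}{9}p_n+\tfrac{4}{9}p_n^2+\tfrac{2}{9}p_n^3$ with $p_n=\mathbb{P}(T\le n)=f^{(n)}(0)$, and to the sandwich $\alpha_1(n)\le p_n\le\alpha_2(n)$. Where you diverge is in proving that sandwich. The paper's Lemma~\ref{lem:BPT} verifies the condition $h(s)<0$ of the Hwang--Wang theorem on best fractional linear generating function bounds to trap $G_X$ between two fractional linear generating functions, invokes Agresti's result that such inequalities are preserved under composition, and reads $\alpha_1(n),\alpha_2(n)$ off Agresti's closed form for the $n$-fold composites at $0$. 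You instead attack the iterates directly: the substitution $\epsilon_n=1-p_n$, $w_n=1/\epsilon_n$ converts the quadratic recursion $\epsilon_n=\tfrac{8}{9}\epsilon_{n-1}-\tfrac{2}{9}\epsilon_{n-1}^2$ into the affine sandwich $\tfrac{9}{8}w_{n-1}+\tfrac{9}{32}\le w_n\le\tfrac{9}{8}w_{n-1}+\tfrac{3}{8}$ via $w_{n-1}\epsilon_{n-1}=1$ and $1+x\le(1-x)^{-1}\le1+\tfrac{4}{3}x$ on $[0,1/4]$; solving the two affine recursions and inverting does recover precisely $1-\alpha_2(n)\le\epsilon_n\le1-\alpha_1(n)$ (I checked: $1-\alpha_1(n)$ and $1-\alpha_2(n)$ are the reciprocals of $\tfrac{13}{4}(9/8)^n-\tfrac{9}{4}$ and $4(9/8)^n-3$). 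This buys independence from the two cited external theorems, at the harmless cost of bounding the iterates only at $s=0$ rather than bounding the generating functions on all of $[0,1]$, which is all this theorem requires. Your treatment of $\mathbb{E}(R)$ is also slightly different and in fact cleaner: since $1-g$ is decreasing you substitute $\alpha_2(n)$ (resp.\ $\alpha_1(n)$) wholesale, whereas the paper bounds the three series $\sum_n\mathbb{P}(T>n)^j$, $j=1,2,3$, separately and recombines them with the mixed signs $\tfrac{17}{9},-\tfrac{10}{9},\tfrac{2}{9}$; your monotone substitution gives marginally sharper numerical values (about $6.43\le\mathbb{E}(R)\le 7.16$), which of course imply the stated $6.144\le\mathbb{E}(R)\le 7.448$.
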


Now, let us state our result related to the MT-model with $k$-stifling with $k\geq 2$. By a standard coupling argument it is not difficult to see that $\theta(d,k)$ is non-decreasing in $k$ (indeed it is non-decreasing in $d$ as well). Therefore, by Corollary \ref{cor:probab} we have $\lim_{d\to\infty}\theta(d,k)=1$ for any $k\geq 2$. In the next Theorem we prove that for the MT-model with $k$-stifling on $\mathbb{T}_d$ with $k\geq 2$ and $d\geq 2$, differently of what happens in the MT-model (i.e., $k=1$), the only behaviour is that the rumor propagates to infinitely many individulas with positive probability. Moreover, we localize the value of the survival probability as a function of $k$ and $d$. 

\smallskip
\begin{theorem}\label{thm:MTkpt}
Consider the MT-model with $k$-stifling on $\mathbb{T}_d$ with $k\geq 2$ and $d\geq 2$. Then $\theta(d,k)>0$. Moreover, let
\begin{equation}\label{eq:S*}
\mathcal{S}^{*}(i,k):=\sum_{m_1=1}^{i} \;\sum_{m_2=m_1}^{i} \cdots \sum_{m_{k-1}=m_{k-2}}^{i}\; \prod_{\ell=1}^{k-1} m_{\ell},
\end{equation}

and 

\begin{equation}\label{eq:S}
\mathcal{S}(i,k):=\sum_{m_1=1}^{i+1} \;\sum_{m_2=m_1}^{i+1} \cdots \sum_{m_{k-1}=m_{k-2}}^{i+1}\; \prod_{\ell=1}^{k-1} m_{\ell}.
\end{equation}

Then
\begin{equation}\label{eq:f1}
\theta(d,k)=1-\sum_{i=1}^{d+1} i\,  \left(\frac{\psi}{d+1}\right)^i \frac{i!}{(d+1)^k} \dbinom{d+1}{i} \mathcal{S}^{*}(i,k)
\end{equation}

\smallskip
\noindent
where $\psi$ is the smallest non-negative root of the equation
\begin{equation}\label{eq:thetak}
 \sum_{i=0}^d
	{d\choose i}\left(\frac{s}{d+1}\right)^{i}\frac{(i+1)!}{(d+1)^k} \mathcal{S}(i,k)   =s.
\end{equation}

\end{theorem}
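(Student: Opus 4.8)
The plan is to reduce the survival of the rumor to the survival of a Galton--Watson process living on $\mathbb{T}_d$. First I would fix a graphical (Poissonian) construction: to each ordered pair of neighbours $(x,y)$ attach an independent rate-one Poisson clock $T_{x\to y}$, and declare that when $T_{x\to y}$ rings the vertex $x$ (if it is a spreader) converts $y$ in case $y$ is ignorant, or registers one stifling experience in case $y$ is already non-ignorant; a direct check shows this reproduces the rates in \eqref{rates}. The decisive structural remark is that, because $\mathbb{T}_d$ has no cycles, a vertex $y\neq{\bf 0}$ has a single neighbour $p(y)$ on the path to ${\bf 0}$, so $y$ can only be made a spreader by $p(y)$, and $p(y)$ is non-ignorant from strictly before the birth of $y$ onward. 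Consequently the set of successors that a spreader $v$ ever converts, together with the time at which $v$ turns into a stifler, is a deterministic function of the clocks $\{T_{v\to y}:y\sim v\}$ alone (using only that the parent is permanently non-ignorant). Since these clock families are disjoint and independent over distinct vertices, and by the memorylessness of the clocks a newborn spreader starts afresh, the random tree of spreaders rooted at ${\bf 0}$ is \emph{exactly} a Galton--Watson tree: the root produces $N_0$ offspring and every other spreader independently produces $N$ offspring, where $N_0,N$ count the successors converted by the root and by a generic non-root spreader. By König's lemma the rumor survives if and only if this tree is infinite, whence $\theta(d,k)=1-\mathbb{E}[\psi^{N_0}]$, with $\psi$ the extinction probability of the $N$-process, i.e. the smallest non-negative root of $f(s)=s$ where $f$ is the generating function of $N$.

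Next I would compute the laws of $N$ and $N_0$ (the content of Subsection \ref{ss:dist}). Watching a single non-root spreader $v$ only through the events ``convert a still-ignorant successor'' and ``receive a stifling experience'', and recording the pair $(j,s)=$ (number of stiflings so far, number of successors still ignorant), one gets an embedded chain started at $(0,d)$ and absorbed at $j=k$. The clean point is that the total jump rate out of $(j,s)$ equals $s+(d+1-s)=d+1$, a constant, so the embedded jump chain converts with probability $s/(d+1)$ and stifles with probability $(d+1-s)/(d+1)$. Enumerating the $k$ stiflings according to how many conversions precede each of them, i.e. summing over compositions of the total number $i$ of conversions, the product of the conversion probabilities telescopes to $i!\binom{d}{i}(d+1)^{-i}$, while the product of the stifling probabilities yields exactly the nested sum defining $\mathcal{S}(i,k)$ in \eqref{eq:S}. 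This gives
\[
f(s)=\sum_{i=0}^{d}\binom{d}{i}\left(\frac{s}{d+1}\right)^{i}\frac{(i+1)!}{(d+1)^{k}}\,\mathcal{S}(i,k),
\]
so that $f(s)=s$ is precisely \eqref{eq:thetak} and $\psi$ is its smallest non-negative root. The same analysis for the root, whose chain starts at $(0,d+1)$ and has no non-ignorant parent (so the $\ell$-th stifling carries weight $S_\ell$ rather than $S_\ell+1$), replaces $\mathcal{S}$ by $\mathcal{S}^{*}$ of \eqref{eq:S*} and gives $\mathbb{P}(N_0=i)=i\cdot i!\binom{d+1}{i}(d+1)^{-i}(d+1)^{-k}\mathcal{S}^{*}(i,k)$; substituting into $\theta(d,k)=1-\mathbb{E}[\psi^{N_0}]$ reproduces \eqref{eq:f1}.

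It remains to establish $\theta(d,k)>0$. Since $\mathbb{P}(N=0)=(d+1)^{-k}>0$ the offspring law is non-degenerate, so $\theta(d,k)>0$ is equivalent to $\psi<1$, equivalently to supercriticality $f'(1)=\mathbb{E}[N]>1$. For $d\geq 3$ this is immediate from Theorem \ref{thm:MTpt} together with the monotonicity of $\theta(d,k)$ in $k$: $\theta(d,k)\geq\theta(d,1)=\theta(d)>0$. For $d=2$ it suffices, again by monotonicity in $k$, to treat $k=2$, where the embedded chain is small enough to solve by hand: one finds $\mathbb{P}(N=0)=\tfrac19$ and $\mathbb{P}(N=1)=\mathbb{P}(N=2)=\tfrac49$, hence $\mathbb{E}[N]=\tfrac43>1$, so $\psi<1$ and $\theta(2,2)>0$. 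Because $N_0\geq 1$ almost surely, $\psi<1$ forces $\mathbb{E}[\psi^{N_0}]<1$, and therefore $\theta(d,k)>0$ throughout the stated range.

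I expect the genuine obstacle to lie in the first paragraph: showing that the tree of spreaders is an \emph{exact} Galton--Watson tree, rather than merely dominating or being dominated by one, requires care in arguing that a spreader's progeny depends only on its own outgoing clocks and is conditionally independent of everything outside its descendant subtree. This is exactly where the absence of cycles and the ``parent is permanently non-ignorant'' observation are indispensable, and where the directed-clock construction pays off. By contrast, the combinatorial identification of the path sums with $\mathcal{S}$ and $\mathcal{S}^{*}$, and the verification that $\psi$ is the smallest non-negative root, are routine once the embedded chain is in hand.
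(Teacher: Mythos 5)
Your proposal is correct and follows essentially the same route as the paper: identify the generations of ever-spreaders as a Galton--Watson process whose root law is $N^{(k)}$ and whose offspring law is $X^{(k)}$, compute these laws via the embedded jump chain with constant total rate $d+1$ (yielding $\mathcal{S}^*$ and $\mathcal{S}$), and apply the standard extinction criterion. The only differences are cosmetic: you justify the exact branching structure more explicitly via a directed-clock graphical construction (the paper dismisses this as ``direct by construction''), and you verify supercriticality by combining monotonicity in $k$ with the explicit $d=2$, $k=2$ computation, whereas the paper proves $\mathbb{E}(X^{(2)})>1$ for all $d\geq2$ directly and then couples for $k\geq3$ --- both arguments are sound and your numerical values ($\tfrac19,\tfrac49,\tfrac49$, mean $\tfrac43$) agree with the paper's formulas.
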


We point out that Theorem \ref{thm:MTkpt} is usefull for the computation of the survival probability. By fixing $k$ and $d$ one can obtain this value by mean of some (computational for higher values) calculations. Table \ref{Tab} exhibits the values of $\theta(d,k)$ for $d\in\{2,3,4,5,6,7,50\}$ and $k\in\{1,2\}$. 

\begin{table}[h!]
\begin{center}
\begin{small}
\begin{tabular}{cr@{.}lr@{.}lr@{.}lr@{.}lr@{.}lr@{.}lr@{.}lr@{.}||}
\hline
$ d $ & \multicolumn{2}{c}{2} & \multicolumn{2}{c}{3} & \multicolumn{2}{c}{4} & \multicolumn{2}{c}{5} & \multicolumn{2}{c}{6} & \multicolumn{2}{c}{7} & \multicolumn{2}{c}{50} \\\hline
$ k=1 $ & 0&000000 & 0&661289 & 0&869802 & 0&931135 & 0&957300 & 0&970887  &0&999583\\
$ k=2 $ & 0&937500 & 0&991439 & 0&997434 & 0&998936 & 0&999474 & 0&999708  &0&999999 \\
\hline
\end{tabular}
\end{small}
\caption{The behavior of $\theta(d,k)$ for $k\in\{1,2\}$ and some values of $d$.}
\label{Tab}
\end{center}
\end{table}

Since $\theta(d,k)$ is non-decreasing in both $k$ and $d$ one can see that for $k\geq 3$ we should have $\theta(d,k)\approx 1$. Indeed, for $k=3$ we have
$$\mathcal{S}^{*}(i,3)=\sum_{m_1 =1}^{i} \sum_{m_2 =m_1}^{i} m_1 m_2 = \frac{1}{24} i (i+1) (3i^2 + 7i +2),$$
and
$$\mathcal{S}(i,3):=\sum_{m_1=1}^{i+1} \;\sum_{m_2=m_1}^{i+1} m_1 m_2 =  \frac{1}{24}  (i+1)(i+2) (3i^2 + 13i +12),$$
which together with \eqref{eq:f1} and \eqref{eq:thetak} for $d=2$ implies $\theta(2,3)= 0.9964$.

Remember that the original Maki-Thompson model is obtained by considering $k=1$. Consequently, our theorems contribute with the theory complementing the existing results, proved by \cite{sudbury,watson,carnal,belen08,lebensztayn/machado/rodriguez/2011a}, to the case of infinite Cayley trees. %{\color{red}Our approach relies on a suitable comparison between the MT-model with a branching process. We believe that some refinements of our arguments could be useful to study the model in other tree-like graphs.}

\section{Proofs}\label{sec:proofs}

The main idea behind our proofs is the identification of an underlying branching process related to the rumor process. Then we apply well-known results of these processes. We shall see that the offspring distribution of such a branching process is the same as the one of the number of spreaders one spreader generates. Therefore, it is enough to study the mean of this distribution to obtain results about the survival or not of the rumor and its generating function to localize the survival probability, respectively.  We subdivide this section into two parts: in the first one, Subsection \ref{ss:dist}, we study the distribution and mean of the number of spreaders one spreader generates for the MT-model first and for the MT-model with $k$-stifling later. The second part, Subsection \ref{ss:thm}, is devoted to construct the underlying branching process whose survival is equivalent with the survival of the rumor process. Also in this subsection we prove our theorems.     

\subsection{The distribution of the number of spreaders one spreader generates}
\label{ss:dist}

Let us start with the MT-model on $\mathbb{T}_d$. 
Let $N$ be the number of spreaders generated by the initial spreader. First we are interested in the law of this discrete random variable. 

\begin{lemma}
\begin{equation}\label{eq:distN}
\mathbb{P}(N=i)=i ! \dbinom{d+1}{i}\frac{i}{(d+1)^{i+1}},\;\;\; i\in \{1,\ldots, d+1\}.
\end{equation}
\end{lemma}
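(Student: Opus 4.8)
The plan is to reduce the continuous-time dynamics in a neighborhood of the root to a purely combinatorial count of the birthday / coupon-collector type. First I would fix a graphical representation of the process: to each ordered pair of neighbors $(x,y)$ with $x\sim y$ attach an independent rate-one Poisson process $P_{x\to y}$ of ``contact arrows'', governed by the rule that when $P_{x\to y}$ rings and $x$ is a non-stifler spreader, an ignorant $y$ becomes a spreader while a non-ignorant $y$ turns $x$ into a stifler. A direct check shows this reproduces exactly the rates \eqref{rates} in the case $k=1$, since the rate at which a spreader stifles equals its number of non-ignorant neighbors and the rate at which an ignorant is activated equals its number of spreader neighbors.

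Next I would isolate the processes relevant to the root. Because $\mathbb{T}_d$ is a tree and every vertex other than ${\bf 0}$ starts ignorant, an induction on the distance to the root shows that the rumor can only spread outward: no neighbor $j$ of ${\bf 0}$ can become non-ignorant before ${\bf 0}$ itself converts it, since any descendant of $j$ could only be activated after $j$ is. Consequently the fate of the root depends solely on the $d+1$ processes $P_{{\bf 0}\to j}$, $j\in\{1,\dots,d+1\}$. Reading their rings in increasing time order as a sequence $J_1,J_2,\dots$ of contacted neighbors, the root converts $J_m$ whenever $J_m\notin\{J_1,\dots,J_{m-1}\}$ and becomes a stifler at the first index $M$ for which $J_M$ repeats an earlier value. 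Hence $N=M-1$, the number of distinct neighbors contacted before the first repetition.

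I would then invoke the superposition property of Poisson processes: merging the $d+1$ independent rate-one clocks yields a sequence $(J_m)_{m\ge 1}$ that is i.i.d.\ uniform on $\{1,\dots,d+1\}$. This is precisely the birthday problem with $n=d+1$ types, and $N$ is the number of distinct draws preceding the first collision. Thus $\{N=i\}$ is the event that the first $i$ draws are pairwise distinct and the $(i+1)$-th coincides with one of them. The first event has probability $\frac{(d+1)!}{(d+1-i)!\,(d+1)^i}$, and the conditional probability of the second is $\frac{i}{d+1}$, so
$$\mathbb{P}(N=i)=\frac{(d+1)!}{(d+1-i)!}\,\frac{i}{(d+1)^{i+1}},\qquad i\in\{1,\dots,d+1\}.$$
Rewriting $\frac{(d+1)!}{(d+1-i)!}=i!\binom{d+1}{i}$ yields \eqref{eq:distN}.

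The computation in the last paragraph is routine; the step requiring care is the reduction in the second paragraph, namely verifying that the outward-spreading property lets us discard every Poisson process except the $d+1$ emanating from the root, and that a neighbor is non-ignorant at a root-contact time exactly when the root has already contacted it once. Once this is established, the uniformity of the merged sequence and the coupon-collector count are immediate, and they also explain the range $i\in\{1,\dots,d+1\}$, as the root generates at least one spreader and at most one per neighbor.
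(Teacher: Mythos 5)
Your proposal is correct and follows essentially the same route as the paper: the authors likewise observe that $\{N=i\}$ means the first $i$ contacts of the root are with (distinct) ignorant neighbors and the $(i+1)$-th is a stifling experience, giving $\bigl\{\prod_{j=0}^{i-1}\tfrac{d+1-j}{d+1}\bigr\}\tfrac{i}{d+1}$, which is your expression. The only difference is that you make explicit, via the Poisson graphical construction and the outward-spreading argument, the reduction to i.i.d.\ uniform contacts that the paper's one-line proof takes for granted.
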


\begin{proof}
It is not difficult to see that $N$ takes values in the set $\{1,\ldots,d+1\}$. This is because up to become a stifler the root can contact at most $d+1$ individuals, event which happens when the stifling experience occurs only at the $(d+2)$-th contact. In general, for any $i\in \{1,\ldots,d+1\}$, $\{N=i\}$ occurs if the first $i$ contacts are with ignorants and the $(i+1)$-th contact is a stifling experience. Note that this event has probability given by
$$\mathbb{P}(N=i)=\left\{\prod_{j=0}^{i-1}\left(\frac{d+1-j}{d+1}\right)\right\}\frac{i}{d+1},$$
which can be written as in \eqref{eq:distN}. 
\end{proof}

\begin{remark}
The previous result it is of interest by itself once one realize its connection with the Coupon Collector's Problem - a classic and well-known problem in probability theory. The problem can be stated as follows:  At each stage, a collector obtains a coupon which is equally likely to be any one of $n$ types. Assuming that the results of successive stages are independent, among other results, what is the earliest stage at which all $n$ coupons have been picked at least once? This question and many interesting generalizations have been addressed in the literature, see for example \cite{coupon,kobza,Schelling}. An alternative problem is studying the number of coupons that would be expected drawn up to seeing a duplicate; that is, a coupon that already is part of the collection.  As far as we know, no attention has been paid to this quantity before. We point out that the law of such a variable is the one given by \eqref{eq:distN} with $n=d+1$. 
\end{remark}

In what follows we consider the number of spreaders one given spreader (different from the root) generates. Let $X$ be such a number.

 \begin{lemma}\label{lem:EX}
\begin{equation}\label{eq:distX}
\mathbb{P}(X=i)=\dbinom{d}{i}\frac{(i+1)!}{(d+1)^{i+1}},\;\;\;i\in \{0,\ldots, d\}.\\[.2cm]
\end{equation}

\smallskip
\noindent
Moreover, $\mathbb{E}(X)>1$ if, and only if, $d\geq 3$.
\end{lemma}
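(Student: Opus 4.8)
The plan is to reduce the global continuous-time dynamics to a simple one-dimensional embedded jump chain describing how a single non-root spreader $v$ infects its successors before being stifled, and then to read off both the law of $X$ and its mean from that chain. The starting observation is that when $v$ first becomes a spreader, exactly one of its $d+1$ neighbours is non-ignorant, namely the predecessor $u$ that infected it (which stays non-ignorant forever afterwards), while the remaining $d$ neighbours are its successors $w_1,\dots,w_d$, all still ignorant. The tree structure is what makes the analysis self-contained: the only neighbour of a successor $w_j$ that can ever turn it into a spreader is $v$ itself, because every other neighbour of $w_j$ is one of its own descendants, which remain ignorant until $w_j$ spreads to them. Hence, while $v$ is a spreader, each ignorant successor flips to state $0$ at rate exactly $1$, independently of everything happening elsewhere in the tree.

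First I would set up the embedded chain on $m$, the number of successors of $v$ already converted. Suppose $v$ is still a spreader and has converted $m$ of its successors. Then $v$ has $d-m$ ignorant successors, each being converted at rate $1$, and $1+m$ non-ignorant neighbours (the predecessor together with the $m$ converted successors, all of which remain non-ignorant whether they are spreaders or stiflers), so by the rates in \eqref{rates} with $k=1$ the vertex $v$ is stifled at rate $1+m$. The crucial points are that these two competing rates add up to the constant $d+1$ for every $m$, and that neither of them is altered by transitions elsewhere in the configuration; by the memorylessness of the exponential clocks, the next relevant event is a conversion with probability $(d-m)/(d+1)$ and a stifling with probability $(1+m)/(d+1)$. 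Consequently
\begin{equation*}
\mathbb{P}(X=i)=\left\{\prod_{m=0}^{i-1}\frac{d-m}{d+1}\right\}\frac{i+1}{d+1},\qquad i\in\{0,\dots,d\},
\end{equation*}
and since $\prod_{m=0}^{i-1}(d-m)=d!/(d-i)!=i!\,\dbinom{d}{i}$, this simplifies at once to \eqref{eq:distX}.

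For the mean I would avoid differentiating the formula directly and instead use the tail-sum representation. Since $X$ takes values in $\{0,\dots,d\}$ and $\{X\ge i\}$ is precisely the event that the first $i$ relevant events are all conversions,
\begin{equation*}
\mathbb{E}(X)=\sum_{i=1}^{d}\mathbb{P}(X\ge i)=\sum_{i=1}^{d}\prod_{m=0}^{i-1}\frac{d-m}{d+1}=:f(d).
\end{equation*}
The key point is that $f$ is strictly increasing in $d$: for each fixed $m\ge 0$ one checks $(d+1-m)/(d+2)>(d-m)/(d+1)$ (cross-multiplying leaves a difference of $1+m>0$), so every factor, and hence every term of the sum, strictly increases when $d$ is replaced by $d+1$, while an additional positive term also appears. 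It therefore suffices to evaluate $f$ at the boundary: a direct computation gives $f(2)=\tfrac{2}{3}+\tfrac{2}{9}=\tfrac{8}{9}<1$ and $f(3)=\tfrac{3}{4}+\tfrac{3}{8}+\tfrac{3}{32}=\tfrac{39}{32}>1$, and monotonicity then yields $\mathbb{E}(X)=f(d)>1$ for all $d\ge 3$ and $\mathbb{E}(X)<1$ at $d=2$, which is the claim. I expect the only genuinely delicate step to be the reduction carried out in the first paragraph, that is, justifying that the full interacting Markov process on $\mathbb{T}_d$ really does restrict, with no interference from the rest of the tree, to the clean birth-and-stop chain with constant total rate $d+1$; once that reduction is secured, everything else is a routine computation.
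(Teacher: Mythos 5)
Your proposal is correct. For the distribution, your embedded jump chain with conversion rate $d-m$ and stifling rate $1+m$ (total $d+1$, constant in $m$) is exactly the computation the paper performs, except that the paper carries it out for the root's offspring count $N$ (all $d+1$ neighbours initially ignorant) and then obtains \eqref{eq:distX} from the distributional identity $X=N-1$; your direct derivation for a non-root spreader is the same calculation, and is in fact more explicit than the paper about why the tree structure prevents transitions elsewhere in the configuration from interfering with the two competing clocks. For the mean you genuinely diverge: the paper uses the comparison $\mathbb{E}(X)=\sum_{i}i\,\mathbb{P}(X=i)>\mathbb{P}(X=2)+\sum_{i\geq 1}\mathbb{P}(X=i)=1+\mathbb{P}(X=2)-\mathbb{P}(X=0)$ together with the check $\mathbb{P}(X=2)>\mathbb{P}(X=0)$ for $d\geq 3$, and settles $d=2$ by computing $\mathbb{E}(X)=8/9$ directly; you instead use the tail-sum identity $\mathbb{E}(X)=\sum_{i\geq 1}\mathbb{P}(X\geq i)$ with $\mathbb{P}(X\geq i)=\prod_{m=0}^{i-1}(d-m)/(d+1)$, prove each tail probability is strictly increasing in $d$, and reduce the claim to the boundary evaluations $f(2)=8/9<1$ and $f(3)=39/32>1$. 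Your route is equally elementary and yields slightly more, namely strict monotonicity of $\mathbb{E}(X)$ in $d$ (which the paper only obtains later, for the survival probability, via a coupling remark); the paper's trick avoids the cross-multiplication check but needs the separate $d=2$ computation anyway. Both arguments are complete.
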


\begin{proof}
The law \eqref{eq:distX} may be obtained by observing that $X = N-1$ in law. Now, since $\mathbb{P}(X=i)>0$ for any $i\in \{0,\ldots,d\}$ we have
$$\mathbb{E}(X)=\sum_{i=0}^d i \mathbb{P}(X=i) > P(X=2) + \sum_{i=1}^d \mathbb{P}(X=i),$$
and since $\mathbb{P}(X=2)>\mathbb{P}(X=0)$ provided $d\geq 3$ we conclude
$$\mathbb{E}(X)> \sum_{i=0}^d \mathbb{P}(X=i) =1.$$
That is, $\mathbb{E}(X)>1$ provided $d\geq 3$. For $d=2$, and after some calculations, we obtain $\mathbb{E}(X)=8/9$ so the proof is complete.
\end{proof}

In what follows we consider the MT-model with $k$-stifling, for $k\geq 2$ and for this process we denote by $N^{(k)}$  (or by $X^{(k)}$) the number of spreaders the root (or another spreader) generates. 	
	
	\begin{lemma}\label{lem:distgeral}
\begin{equation}\label{eq:distgeral}
\mathbb{P}\left(X^{(k)}=i\right)= {d\choose i}\,\frac{(i+1)!}{(d+1)^{i+k}}\, \mathcal{S}(i,k), \;\;\; i \in \{0,1,2,3,\ldots,d\}. 
\end{equation}	
Moreover, $\mathbb{E}(X^{(k)})>1$ for any $d\geq 2$.
	\end{lemma}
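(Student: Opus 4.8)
The plan is to derive the distribution of $X^{(k)}$ by the same combinatorial bookkeeping used for $N$ and $X$ in the previous lemmas, and then to reduce the claim $\mathbb{E}(X^{(k)})>1$ to a comparison with the already-established case $k=1$. A non-root spreader $v$ starts life with one non-ignorant neighbor (its parent) and $d$ ignorant successors. The spreader makes a sequence of contacts at rate one; each contact is uniformly distributed among its $d+1$ neighbors. The spreader survives until it has accumulated $k$ stifling experiences, i.e. $k$ contacts with non-ignorant neighbors (the parent, or a successor that has already become a spreader, or any stifler). So first I would set up the description of the trajectory: let $X^{(k)}=i$ mean that exactly $i$ of the $d$ successors get converted before the spreader completes its $k$-th stifling contact. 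A successor is converted on its first contact; every subsequent contact with that same successor counts as a stifling experience.

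Next I would count the sequences of contacts realizing $\{X^{(k)}=i\}$, tracking the probabilities. There are $\binom{d}{i}$ choices for which $i$ successors are eventually infected, and then $i!$ for the order in which they are first contacted; each such first contact carries probability $1/(d+1)$, contributing the factor $\binom{d}{i}\,i!\,(d+1)^{-i}$ that already appears in \eqref{eq:distX}. The new ingredient for $k\geq 2$ is that between and after these $i$ successful conversions the spreader may make stifling contacts (with the parent, with an already-infected successor, or — but here there are no stiflers among its own offspring until it dies), and it must accumulate exactly $k$ of them, the last one terminating the process. The key is to see that the combinatorial weight of all admissible interleavings of these $k-1$ non-terminal stifling contacts, normalized by the appropriate powers of $d+1$, is exactly the nested sum $\mathcal{S}(i,k)$ defined in \eqref{eq:S}: the index $m_\ell$ records how many neighbors are ``non-ignorant'' (hence available to generate a stifling experience) at the moment of the $\ell$-th stifling contact, so the product $\prod_{\ell=1}^{k-1} m_\ell$ counts the weighted ways to place the intermediate stifling experiences, and the ranges $m_\ell\in\{m_{\ell-1},\dots,i+1\}$ enforce monotonicity as successors are progressively converted. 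Assembling the factor $(i+1)!/(d+1)^{i+k}$ (the extra $k$ powers absorbing the $k$ stifling contacts) times $\binom{d}{i}\,\mathcal{S}(i,k)$ yields \eqref{eq:distgeral}; I would verify the normalization $\sum_{i=0}^d \mathbb{P}(X^{(k)}=i)=1$ as a consistency check, and confirm it reduces to \eqref{eq:distX} when $k=1$ (where $\mathcal{S}(i,1)$ is an empty product equal to $1$).

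Finally, for the moment bound I would couple the $k$-stifling process with the $1$-stifling process: a spreader that must be stifled $k\geq 2$ times before ceasing contacts infects at least as many successors as one that stops after a single stifling experience, so $X^{(k)}$ stochastically dominates $X=X^{(1)}$, giving $\mathbb{E}(X^{(k)})\geq \mathbb{E}(X)$. This is not quite enough, since Lemma \ref{lem:EX} only gives $\mathbb{E}(X)>1$ for $d\geq 3$ and $\mathbb{E}(X)=8/9<1$ at $d=2$. Hence the crux is the boundary case $d=2$: there I would compute $\mathbb{E}(X^{(2)})$ directly from \eqref{eq:distgeral} and check it exceeds $1$, which combined with monotonicity in $k$ closes the gap for all $d\geq 2$, $k\geq 2$.

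\textbf{Main obstacle.} I expect the genuinely delicate step to be justifying that the weighted interleaving count is precisely $\mathcal{S}(i,k)$ — that is, matching the dynamics (the changing number of available non-ignorant neighbors as conversions occur) to the monotone nested summation with the correct summation ranges and the product $\prod_{\ell=1}^{k-1} m_\ell$. Getting the indices $m_\ell$ and their bounds ($i$ versus $i+1$, which distinguishes $\mathcal{S}^*$ from $\mathcal{S}$, cf. \eqref{eq:S*}–\eqref{eq:S}) to correspond exactly to whether the parent edge is counted is where the bookkeeping must be done with care.
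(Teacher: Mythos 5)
Your plan is correct and follows essentially the same route as the paper: the paper also decomposes $\{X^{(k)}=i\}$ over the positions $1\le m_1<\cdots<m_{k-1}\le i+k-1$ of the non-terminal stifling contacts, computes $\mathbb{P}(A_{m_1,\ldots,m_{k-1}})=\binom{d}{i}\frac{(i+1)!}{(d+1)^{i+k}}\prod_{\ell=1}^{k-1}(m_\ell-\ell+1)$ (the factor $m_\ell-\ell+1$ being exactly your ``number of non-ignorant neighbors at the $\ell$-th stifling contact''), and summing gives $\mathcal{S}(i,k)$ after reindexing --- which resolves the bookkeeping you flag as the main obstacle. For the moment bound the paper argues slightly differently but equivalently: it shows $\mathbb{P}(X^{(2)}=0)<\mathbb{P}(X^{(2)}=2)$ for all $d\ge 2$ and reuses the trick of Lemma~\ref{lem:EX} to get $\mathbb{E}(X^{(2)})>1$ uniformly in $d$, then couples over $k$, whereas you would handle $d\ge 3$ by domination over $k=1$ and compute $\mathbb{E}(X^{(2)})=4/3$ directly at $d=2$; both work.
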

	
\begin{proof}
It is not difficult to see that $X^{(k)}$ takes values on the set $\{0,1,\ldots,d\}$. Note that, for any $i\in \{0,1,\ldots,d\}$, $\{X^{(k)}=i\}$ occurs if, and only if, we have exactly $k-1$ stifling experiences between the first $(i+k-1)$ contacts and we have the $k$-th stifling experience at the $(i+k)$-th contact with another individual. Let $1\leq m_1 < m_2 < \cdots < m_{k-1} \leq i+k-1$ and let $A_{m_1, m_2, \ldots, m_{k-1}}$ be the event of the $k$ stifling experiences occur at the $m_1$-th, $m_2$-th, $\ldots$, $m_{k-1}$-th, and $(i+k)$-th contacts with other individuals, respectively. Thus defined we can write
$$\{X^{(k)}=i\} = \bigcup_{1\leq m_1 < m_2 < \cdots < m_{k-1} \leq i+k-1}A_{m_1, m_2, \ldots, m_{k-1}},$$
and since
$$\mathbb{P}\left(A_{m_1, m_2, \ldots, m_{k-1}}\right) ={d\choose i}\,\frac{(i+1)!}{(d+1)^{i+k}} \prod_{\ell=1}^{k-1} (m_{\ell}-\ell+1),$$\
we get \eqref{eq:distgeral}.

In order to prove that $\mathbb{E}(X^{(k)})>1$ for any $d\geq 2$ we shall consider first the case $k=2$. By \eqref{eq:distgeral} we have that
\begin{equation}
\mathbb{P}\left(X^{(2)}=i\right)= \dbinom{d}{i} \, \frac{(i+1)\,(i+2)!}{2(d+1)^{i+1}}, \;\;\; i \in \{0,1,2,3,\ldots,d\}. 
\end{equation}
In particular, note that for any $d\geq 2$ we have
$$\mathbb{P}\left(X^{(2)}=0\right)=\frac{1}{(d+1)^2}<\frac{18d(d-1)}{(d+1)^4}=\mathbb{P}\left(X^{(2)}=2\right).$$
Thus, by applying similar arguments as in Lemma \eqref{lem:EX} we have 
$$\mathbb{E}(X^{(2)}) > \mathbb{P}(X^{(2)}=2)+1- \mathbb{P}(X^{(2)}=0)>1.$$

Since a tree is a graph without cycles, and since we are assuming that only the root is a spreader at time zero, a standard coupling argument allow us to conclude that given $d\geq 2$ we have for any $k\geq 3$ that $\mathbb{E}(X^{(k)})\geq \mathbb{E}(X^{(2)})$. This complete the proof. 
\end{proof}

\begin{remark} Coming back to the Coupon Collector's Problem, and analogously as for the case $k=1$, $X^{(k)}$ has the same distribution as the number of coupons that would be drawn up to seeing the $k$th coupon that already is part of the collection provided the collection is initially formed by one coupon.
\end{remark}

\subsection{Proof of Main Theorems}
\label{ss:thm}

\subsubsection{The underlying branching process}
\label{ss:BP}
 Consider the MT-model on $\mathbb{T}_d$ with $k$-stifling, and assume that $\eta_{0}({\bf 0})=0$ and $\eta_{0}(x)=-1$ for any $x\neq {\bf 0}$. For any $n\geq 0$ we let
$$\mathcal{B}_{n}:=\left\{v\in \partial \mathbb{T}_{d,n+1}: \bigcup_{t> 0} \{\eta_{t}(v)=1\right\},$$
and we define the random variable $Z_n:= |\mathcal{B}_n|$. Thus defined, $\mathcal{B}_{0}$ is formed by those vertices at distance one from ${\bf 0}$ which are spreaders at some time, $\mathcal{B}_{1}$ is formed by those vertices at distance two from ${\bf 0}$ which are spreaders at some time, and so on. Moreover, $Z_0$ is equal to $N^{(k)}$ in law, and it is not difficult to see that 
\begin{equation}\label{eq:BPZ}
Z_{n+1}=\sum_{i=1}^{Z_n}X_i^{(k)},
\end{equation}
where $X_1^{(k)},X_2^{(k)},\ldots$ are independent copies of $X^{(k)}$. Thus defined, $(Z_n)_{n\geq 0}$ is a branching process such that $Z_0$ has a law given by $N^{(k)}$ and the offspring distribution is given by $X^{(k)}$ (see \eqref{eq:distgeral}). For a complete reference of the Theory of Branching Processes we refer the reader to \cite{BranchingProcesses}. Our construction gains in interest if we realize the following connection between the rumor model and the branching process. 

\begin{lemma}\label{lem:BPRU}
The MT-model on $\mathbb{T}_d$ survives if, and only if, the branching process $(Z_n)_{n\geq 0}$ survives.
\end{lemma}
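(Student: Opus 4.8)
The plan is to reduce both survival events to a single combinatorial statement about the random rooted subtree of $\mathbb{T}_d$ formed by the vertices that ever hear the rumor. Write $\tau(v):=\inf\{t\geq 0:\eta_t(v)\neq -1\}$ for the first time $v$ leaves the ignorant state (with $\tau(v)=+\infty$ if $v$ stays ignorant forever), and set
$$\mathcal{T}:=\{v\in\mathbb{T}_d:\tau(v)<\infty\}.$$
Since a vertex first enters state $0$ and only afterwards can advance, the event $v\in\mathcal{T}$ is exactly the event that $v$ is a spreader at some time; hence $\mathcal{B}_n$ is the set of vertices of $\mathcal{T}$ lying in $\partial\mathbb{T}_{d,n+1}$, and $Z_n=|\mathcal{B}_n|$.

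First I would establish the structural backbone: $\mathcal{T}$ is a connected subtree containing ${\bf 0}$, and $\tau$ is strictly increasing along every ray, i.e.\ $\tau(u)<\tau(v)$ whenever $u$ is the parent of $v\in\mathcal{T}\setminus\{{\bf 0}\}$. Connectivity follows by induction on the jump times of the process: by \eqref{rates} a vertex can only leave state $-1$ through a contact with a non-ignorant neighbour, so each newly infected vertex is attached to the already-infected (inductively connected) set. For the monotonicity I would argue by contradiction, choosing among all $v\in\mathcal{T}\setminus\{{\bf 0}\}$ with $\tau(\text{parent of }v)\geq\tau(v)$ one with minimal $\tau(v)$: the neighbour $w$ that infected $v$ satisfies $\tau(w)<\tau(v)$ and is either the parent of $v$, which immediately contradicts $\tau(\text{parent})\geq\tau(v)$, or a child of $v$, in which case $w$ is a strictly smaller counterexample, contradicting minimality. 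Acyclicity of $\mathbb{T}_d$ is essential here: each non-root vertex has a \emph{unique} parent, which is what forces the rumor to travel along the unique path from ${\bf 0}$ to $v$.

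With this in hand the two equivalences are short. By the definition of survival, the rumor survives if and only if there is an infinite ray ${\bf 0}=v_0,v_1,\ldots$ with each $v_i$ a spreader at some time; setting $t_i=\tau(v_i)$, the required clause $t_i<t_{i+1}$ is exactly the monotonicity just proved, while conversely any surviving realisation yields such a ray lying in $\mathcal{T}$. Hence the rumor survives if and only if $\mathcal{T}$ contains an infinite ray. On the other side, the branching process survives if and only if $Z_n\geq 1$ for all $n$, i.e.\ $\mathcal{T}$ has a vertex at every level; since $\mathcal{T}$ is connected (so that a vertex at level $m$ drags all its ancestors into $\mathcal{T}$), this is equivalent to $\mathcal{T}$ being infinite. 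Finally $\mathcal{T}$ is a subtree of the locally finite graph $\mathbb{T}_d$, every vertex having degree $d+1<\infty$, so by König's lemma $\mathcal{T}$ is infinite if and only if it contains an infinite ray. Chaining these equivalences yields the claim, and nothing in the argument depends on $k$.

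The main obstacle is the structural step, namely the strict monotonicity of $\tau$ along parent–child edges. Everything else is bookkeeping together with an appeal to König's lemma, but it is precisely this monotonicity that makes the ``successor'' and ``$t_i<t_{i+1}$'' requirements in the definition of survival line up with the generational structure of $(Z_n)_{n\geq 0}$ in \eqref{eq:BPZ}, and it is exactly here that the cycle-free hypothesis on $\mathbb{T}_d$ is used.
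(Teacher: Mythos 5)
Your proof is correct and takes essentially the same approach the paper intends: the paper disposes of this lemma with the single line ``It is direct by construction,'' and your write-up simply supplies the omitted details of that construction (connectivity of the set of ever-infected vertices, strict increase of the first-infection times along parent--child edges using the uniqueness of paths in the tree, and K\"onig's lemma to pass between ``$\mathcal{T}$ infinite'' and ``$\mathcal{T}$ contains a ray''). Nothing in your argument departs from the construction of $(\mathcal{B}_n)_{n\geq 0}$ in Subsection~\ref{ss:BP}; you have just carried it out more carefully than the paper does.
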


\begin{proof}
It is direct by construction. 
\end{proof}

\subsubsection{Proof of Theorems \ref{thm:MTpt} and \ref{thm:MTkpt}}

We shall use the well-known fact that a branching process survives with positive probability if, and only, if, the mean of the offspring distribution is greater than $1$. Moreover, the survival probability can be obtained as the smallest root in $(0,1]$ of the equation $\varphi(s)=s$, where $\varphi$ is the generating probability function of the offspring distribution. Note that for any generating probability function $\varphi$ we can guarantee $\varphi(1)=1$. Therefore, the proof of Theorem \ref{thm:MTpt} is a consequence of Lemma \ref{lem:BPRU} and Lemma \ref{lem:EX}. Indeed, the MT-model survives with probability positive if, and only if, the underlying branching process does it, which happens if, and only if, $\mathbb{E}(X)>1$. Analogously, the proof of Theorem \ref{thm:MTkpt} is a consequence of Lemma \ref{lem:BPRU} and Lemma \ref{lem:distgeral}.

\subsubsection{Proof of Corollary \ref{cor:probab}} We shall verify that
$$\lim_{d\rightarrow \infty}\sum_{i=1}^{d+1}i! \dbinom{d+1}{i} \left ( \frac{\psi}{d+1} \right)^i  \frac{i}{d+1}=0. $$ Since the limit in the left side of the previous equality can be written as  
\begin{equation}\label{eq:cor1}
\lim_{d\rightarrow \infty} \frac{\psi}{d+1} \sum_{i=1}^{d+1}i! \dbinom{d+1}{i} \left ( \frac{\psi}{d+1} \right)^{i-1}  \frac{i}{d+1},
\end{equation}
and since $|\psi|\leq 1$ so $\lim_{d\rightarrow \infty} \psi/(d+1)=0,$ it is enough if we show that the right side factor in \eqref{eq:cor1} is bounded for any $d$. It is not difficult to see that
$$\sum_{i=1}^{d+1}i! \dbinom{d+1}{i} \left ( \frac{\psi}{d+1} \right)^{i-1}  \frac{i}{d+1}<\sum_{i=1}^{\infty}i \left(\frac{d\psi}{d+1}\right)^{i-1}\leq \sum_{i=1}^{\infty}i \left(\frac{d}{d+1}\right)^{i-1}<\infty.$$
Therefore the proof is complete.

\subsubsection{Proof of Theorem \ref{theo:Sinfty}}

Consider the MT-model on $\mathbb{T}_2$, and let $S_{\infty}$ be the final number of stiflers at the end of the process. We already prove that the MT-model may be seen as the branching process given by \eqref{eq:BPZ}. Therefore $S_{\infty}$ coincides with the total progeny of such a branching processes. In order to prove Theorem \ref{theo:Sinfty} we appeal to \cite{dwass}. Indeed, notice that

\begin{equation}\label{eq:thmSinfty1}
\mathbb{P}(S_\infty = i)=\sum_{n=1}^3 \mathbb{P}(S_\infty = i|N=n)\mathbb{P}(N = n),
\end{equation}
where

\begin{equation}
\label{eq:thmSinfty2}
\mathbb{P}(N = 1)=\frac{3}{9},\;\;\; \mathbb{P}(N = 2)=\frac{4}{9},\;\;\;\mathbb{P}(N = 3)=\frac{2}{9},
\end{equation}
and by \cite[Main Theorem, p. 682]{dwass}
\begin{equation}\label{eq:thmSinfty3}
\mathbb{P}(S_\infty = i|N=n)=\left\{
\begin{array}{lc}
\displaystyle\frac{n}{i}\,\mathbb{P}\left(\sum_{j=1}^i X_j = i-n\right),& i\geq n,\\[.4cm]
0,&\text{ other wise.}
\end{array} 
\right.
\end{equation}
Here $X_1, X_2, \ldots$ denote independent and identically distributed random variables with common   law given by \eqref{eq:distX} (with $d=2$). If $G_i(s)$ and $G_{X_j}(s)$ are the probability generating functions of $\sum_{j=1}^i X_j$ and $X_j$, respectively, then we have
\begin{equation}\label{eq:thmSinfty4}
G_i(s)=\prod_{j=1}^{i} G_{X_j}(s) = \left(\frac{2s^2 + 4s +3}{9}\right)^i.
\end{equation}
By joining \eqref{eq:thmSinfty1} to \eqref{eq:thmSinfty4} we get \eqref{eq:thmSinfty}. 

Now let us prove that $\mathbb{E}(S_\infty )=18$. Since
\begin{equation}\label{eq:loco}
\mathbb{E}(S_\infty)=\sum_{n=1}^3 \mathbb{E}(S_\infty|N=n)\mathbb{P}(N = n),
\end{equation}
and $\mu:=E(X)=8/9$ (see Lemma \ref{lem:EX} for $d=2$) we have for $n\in \{1,2,3\}$
\begin{equation}\label{eq:loco2}
\mathbb{E}(S_\infty|N=n)= \left(1+ n\sum_{i=0}^{\infty}\mu^i\right)=1+9n.
\end{equation}
We conclude by \eqref{eq:thmSinfty2}, \eqref{eq:loco} and \eqref{eq:loco2} that $\mathbb{E}(S_{\infty})=18$.

\subsubsection{Proof of Theorem \ref{theo:Rinfty}}

Before proving Theorem \ref{theo:Rinfty} we state an auxiliary result regarding branching processes. 

\begin{lemma}\label{lem:BPT}
Let $(Z_{n})_{n\geq 0}$ be a branching processes with $Z_0=1$ and offspring distribution given by \eqref{eq:distX} (with $d=2$). Let $G_X$ and $G_{Z_n}$ be the probability generating functions of $X$ and $Z_n$, respectively. Then,
\begin{equation}\label{eq:FLGX}
13/45 + (128s)/\{45(5-s)\} \leq G_X(s) \leq 1/3 + (2s)/(4-s), \;\;\;s\in [-1,1],
\end{equation}

and, for $n\geq 0$ 

\begin{equation}\label{eq:Tbounds}
\frac{(13/9)\left\{1-(8/9)^n\right\}}{13/9 - (8/9)^n} \leq \mathbb{P}(T\leq n) \leq \frac{(4/3)\left\{1-(8/9)^n\right\}}{4/3 - (8/9)^n}.
\end{equation}
\end{lemma}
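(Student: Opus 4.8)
The plan is to treat the two displayed bounds separately: inequality \eqref{eq:FLGX} is a pointwise comparison of generating functions, which I would prove directly, and inequality \eqref{eq:Tbounds} is its consequence obtained by iterating that comparison. First I would make the offspring generating function explicit: evaluating \eqref{eq:distX} at $d=2$ gives $\mathbb{P}(X=0)=1/3$, $\mathbb{P}(X=1)=4/9$, $\mathbb{P}(X=2)=2/9$, so that $G_X(s)=(2s^2+4s+3)/9$, in agreement with \eqref{eq:thmSinfty4}. Writing the lower and upper comparison functions in \eqref{eq:FLGX} as $g_1(s)=\tfrac{13}{45}+\tfrac{128s}{45(5-s)}$ and $g_2(s)=\tfrac13+\tfrac{2s}{4-s}$, I would clear denominators and verify the two algebraic identities
$$g_2(s)-G_X(s)=\frac{2s(1-s)^2}{9(4-s)},\qquad G_X(s)-g_1(s)=\frac{2(1-s)^3}{9(5-s)}.$$
Since $4-s>0$ and $5-s>0$ throughout, inspection of the signs of the numerators gives $g_1(s)\le G_X(s)\le g_2(s)$ on $[0,1]$, which is the range relevant to the branching-process iteration; the lower bound in fact persists on all of $[-1,1]$. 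This disposes of \eqref{eq:FLGX}.

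For \eqref{eq:Tbounds} I would first observe that $T$ is the extinction time of $(Z_n)_{n\ge0}$ with $Z_0=1$, namely $T=\min\{n:Z_n=0\}$, so that $\mathbb{P}(T\le n)=\mathbb{P}(Z_n=0)=G_X^{(n)}(0)$, the $n$-fold composition of $G_X$ evaluated at $0$. The key step is a monotone domination under composition: if $f\le h$ on $[0,1]$ with $f$ and $h$ non-decreasing maps of $[0,1]$ into itself, then $f^{(n)}\le h^{(n)}$ on $[0,1]$, proved by the one-line induction $f^{(n+1)}=f\circ f^{(n)}\le f\circ h^{(n)}\le h\circ h^{(n)}=h^{(n+1)}$. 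Since $g_1$, $G_X$, $g_2$ are all probability generating functions of proper laws (the $g_i$ being linear-fractional offspring distributions), they are non-decreasing self-maps of $[0,1]$, so applying the domination to the pairs $(g_1,G_X)$ and $(G_X,g_2)$ and evaluating at $0$ yields $g_1^{(n)}(0)\le G_X^{(n)}(0)\le g_2^{(n)}(0)$.

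It remains to compute the two outer iterates in closed form, and here I would exploit that $g_1$ and $g_2$ are linear-fractional (Möbius) maps. Each has fixed points $s=1$ and a second fixed point $\rho$, namely $\rho=13/9$ for $g_1$ and $\rho=4/3$ for $g_2$, and in both cases the multiplier at $s=1$ equals the common mean $G_X'(1)=8/9$ (while that at $\rho$ equals $9/8$). Conjugating by the Möbius transformation $\phi(s)=(s-1)/(s-\rho)$, which sends $1\mapsto 0$ and $\rho\mapsto\infty$, turns the map into the scaling $w\mapsto(8/9)w$, so that $\phi\big(g^{(n)}(0)\big)=(8/9)^n\,\phi(0)$; solving this linear equation for $g^{(n)}(0)$ returns exactly $\alpha_1(n)$ for $g_1$ and $\alpha_2(n)$ for $g_2$, i.e.\ the two sides of \eqref{eq:Tbounds}.

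The only place where anything could go wrong is the explicit iteration in the last paragraph: one must correctly locate the fixed points $1$ and $\rho$, confirm that the multiplier at $s=1$ is precisely $8/9$ for both maps, and carry the conjugation through without sign or bookkeeping errors. Everything else — the two factorizations behind \eqref{eq:FLGX} and the monotone induction — is short and mechanical.
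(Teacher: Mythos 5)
Your proposal is correct and complete, but it takes a genuinely different, more self-contained route than the paper. The paper obtains the two fractional-linear bounds by invoking Hwang and Wang's theorem on best fractional linear generating function bounds (after verifying the sign condition $h(s)=G_X'(1)G_X'(s)(1-s)^2-(1-G_X(s))^2<0$), and then cites Agresti both for the preservation of the inequality under composition and for the closed form of the $n$-th iterate at $0$. You instead verify the bounds directly via the factorizations $g_2(s)-G_X(s)=2s(1-s)^2/\{9(4-s)\}$ and $G_X(s)-g_1(s)=2(1-s)^3/\{9(5-s)\}$ (both of which I checked), prove the composition monotonicity by a one-line induction on nondecreasing self-maps of $[0,1]$, and compute $g_i^{(n)}(0)$ by conjugating the M\"obius maps to $w\mapsto(8/9)w$; the fixed points $1$ and $13/9$ (resp.\ $4/3$) and the common multiplier $g_i'(1)=8/9$ are as you state, and solving the conjugated recursion does return $\alpha_1(n)$ and $\alpha_2(n)$. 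What the paper's route buys is the extra information that $L$ and $U$ are the \emph{best} fractional-linear bounds; what yours buys is independence from the two cited references and, incidentally, the correct observation that the upper bound in \eqref{eq:FLGX} actually fails for $s\in[-1,0)$ (since $g_2-G_X$ changes sign at $s=0$) and should only be asserted on $[0,1]$, which is all the argument ever uses.
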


\begin{proof}
The spirit behind the proof of the lemma is to apply \cite[Main Theorem, p. 450]{hwang}. The first step is to check the condition
$$h(s):=G_X^{(1)}(1)G_X^{(1)}(s)(1-s)^2-(1-G_X(s))^2<0,$$
where $G_{X}^{(1)}(s):=d G_X(s) / ds$. Indeed, it is not difficult to see that
$$h(s)=\frac{8}{9}\, \frac{4(s+1)}{9} (1-s)^2 -\left\{1-\frac{(2s^2+4s +3)}{9}\right\}^2,$$
and after some calculations we obtain
$$h(s)=-\frac{32}{81}(s-1)^2\left(s^2 + \frac{23}{4}s +\frac{3}{4}s\right).$$
Thus $h(s)<0$ for all $s\in [0,1)$. Therefore, by (i) from \cite[Main Theorem, p. 450]{hwang} we have that the best upper bounding fractional linear generating function for $G_X(s)$ is given by $U(s):=1/3 + (2s)/(4-s)$. Analogously, (ii) from \cite[Main Theorem, p. 450]{hwang} implies that the best lower bounding fractional linear generating function for $G_X(s)$ is given by $L(s):=13/45 + (128s)/\{45(5-s)\}$. Therefore we get \eqref{eq:FLGX}. It is well-known, see \cite{agresti/1974}, that the inequality is preserved by compositions of the same functions. This in turns implies that for any $n\geq 0$
$$L_n(s) \leq G_{X,n}(s) \leq U_{n}(s),$$
where $L_n, G_{X,n},$ and $U_{n}$ are the $n$-th composition of $L,G_{X},$ and $U$, respectively. Moreover, since $G_{X,n}(s)=G_{Z_n}(s)$, $G_{Z_n}(0)=\mathbb{P}(T\leq n)$, and by \cite[Equation (3.1)]{agresti/1974} we have
$$L_n(0)=\frac{(13/9)\left\{1-(8/9)^n\right\}}{13/9 - (8/9)^n},\;\;\; U_n(0)=\frac{(4/3)\left\{1-(8/9)^n\right\}}{4/3 - (8/9)^n};$$
we conclude \eqref{eq:Tbounds} and the proof is complete.

%In order to prove \eqref{eq:Tbounds} we apply results from \cite{agresti/1974}.

\end{proof}

Consider the MT-model on $\mathbb{T}_2$ and let $R$ be given by \eqref{eq:range} the range of the spreading. Note that, for any $n$
$$\mathbb{P}(R\leq n)=\sum_{i=1}^{3}\mathbb{P}(R\leq n | N=i)\mathbb{P}(N=i)=\sum_{i=1}^{3}\mathbb{P}(T\leq n)^i\mathbb{P}(N=i),$$
where $T$ is the extinction time of a branching process $(Z_{n})_{n\geq 0}$ with $Z_0=1$ and  offspring distribution given by \eqref{eq:distX} (with $d=2$). From the law of $N$, see \eqref{eq:distN} (with $d=2$) we have 

\begin{equation}\label{eq:RT}
\mathbb{P}(R\leq n)=\frac{3}{9}\, \mathbb{P}(T\leq n) + \frac{4}{9}\, \mathbb{P}(T\leq n)^2 + \frac{2}{9} \, \mathbb{P}(T\leq n)^3.
\end{equation}

In addition, by Lemma \ref{lem:BPT} we get 
$$\alpha_1(n) \leq  \mathbb{P}(T\leq n) \leq \alpha_2(n).$$ 

In order to find a lower and upper bound for $\mathbb{E}(R)$ we use \eqref{eq:RT} to obtain
$$\mathbb{P}(R>n) = \frac{17}{9}\,\mathbb{P}(T>n) - \frac{10}{9}\,\mathbb{P}(T>n)^2 + \frac{2}{9}\,\mathbb{P}(T>n)^3.$$
Therefore

\begin{equation}\label{eq:RTlong}
\mathbb{E}(R)=\sum_{n=0}^{\infty} \mathbb{P}(R>n)=\frac{17}{9}\, \sum_{n=0}^{\infty}\mathbb{P}(T>n) - \frac{10}{9}\,\sum_{n=0}^{\infty}\mathbb{P}(T>n)^2 + \frac{2}{9}\,\sum_{n=0}^{\infty}\mathbb{P}(T>n)^3.
\end{equation}
Again, by Lemma \ref{lem:BPT} we have the following bounds for the series of the previous expression:
$$4.4619\leq \sum_{n=0}^{\infty}\frac{(1/3)(8/9)^n}{4/3-(8/9)^n} \leq \sum_{n=0}^{\infty}\mathbb{P}(T>n)\leq \sum_{n=0}^{\infty}\frac{(4/9)(8/9)^n}{13/9-(8/9)^n} \leq 4.9792,$$

$$2.0982\leq \sum_{n=0}^{\infty}\left\{\frac{(1/3)(8/9)^n}{4/3-(8/9)^n}\right\}^2 \leq \sum_{n=0}^{\infty}\mathbb{P}(T>n)^2\leq \sum_{n=0}^{\infty}\left\{\frac{(4/9)(8/9)^n}{13/9-(8/9)^n}\right\}^2 \leq 2.3592,$$

$$1.5189\leq \sum_{n=0}^{\infty}\left\{\frac{(1/3)(8/9)^n}{4/3-(8/9)^n}\right\}^3 \leq \sum_{n=0}^{\infty}\mathbb{P}(T>n)^3\leq \sum_{n=0}^{\infty}\left\{\frac{(4/9)(8/9)^n}{13/9-(8/9)^n}\right\}^3 \leq 1.6804.$$

Finally, by a suitable application of the previous bounds in \eqref{eq:RTlong} we get

$$6.144 = \frac{17}{9}\, 4.4619  - \frac{10}{9}\, 2.3592 + \frac{2}{9} 1.5189 \leq \mathbb{E}(R)\leq \frac{17}{9}\, 4.9792  - \frac{10}{9}\, 2.0982+ \frac{2}{9}\, 1.6804 = 7.448.$$

\section*{Acknowledgements}
Part of this work has been developed during a visit of V.V.J. to the ICMC-University of S\~ao Paulo and a visit of P.M.R. to the IME-Federal University of Goi\'as. The authors thank these institutions for the hospitality and support. Special thanks are also due to the two anonymous reviewers for their helpful comments and suggestions.

\end{document}